\title{Infinitely Many Tangent Functors on Diffeological Spaces}
\author{Masaki Taho}                                                                               
\subjclass[2020]{Primary~57P05; Secondary~58A05}
\keywords{diffeology, tangent space}
\address{GRADUATE SCHOOL OF MATHEMATICAL SCIENCES, THE
UNIVERSITY OF TOKYO, 3-8-1 KOMABA, MEGURO-KU, TOKYO, 153-8914,
JAPAN}
\email{taho@ms.u-tokyo.ac.jp}
\crefname{thm}{theorem}{theorems}
\Crefname{thm}{Theorem}{Theorems}
\crefname{prop}{proposition}{propositions}
\Crefname{prop}{Proposition}{Propositions}
\crefname{lemma}{lemma}{lemmas}
\Crefname{lemma}{Lemma}{Lemmas}
\crefname{cor}{corollary}{corollaries}
\Crefname{cor}{Corollary}{Corollaries}
\crefname{defi}{definition}{definitions}
\Crefname{defi}{Definition}{Definitions}
\crefname{ex}{example}{examples}
\Crefname{ex}{Example}{Examples}
\crefname{remark}{remark}{remarks}
\Crefname{remark}{Remark}{Remarks}
\crefname{q}{question}{questions}
\Crefname{q}{Question}{Questions}
\crefname{conj}{conjecture}{conjectures}
\Crefname{conj}{Conjecture}{Conjectures}
\crefname{fact}{fact}{facts}
\Crefname{fact}{Fact}{Facts}
\crefname{equation}{equation}{equations}
\Crefname{equation}{Equation}{Equations}
\crefname{figure}{figure}{figures}
\Crefname{figure}{Figure}{Figures}
\crefname{table}{table}{tables}
\Crefname{table}{Table}{Tables}
\crefname{section}{section}{sections}
\Crefname{section}{Section}{Sections}
\newtheorem{thm}{Theorem}[section]
\newtheorem{prop}[thm]{Proposition}
\newtheorem*{mthm*}{Main Theorem}
\newtheorem*{thm*}{Theorem}
\newtheorem*{prop*}{Proposition}
\theoremstyle{definition}
\newtheorem{defi}[thm]{Definition}
\newtheorem{ex}[thm]{Example}
\newtheorem{remark}[thm]{Remark}
\newcommand{\dflg}{{\mathrm {Dflg}}}                 
\newcommand{\dflgloc}{{\mathrm {Dflg}}_*^{loc}}      
\newcommand{\enclzeroloc}{{\mathrm {Eucl}_0^{loc}}}  
\newcommand{\vect}{{\mathrm {Vect}}}                 
\newcommand{\inclzeroloc}{i_0^{loc}}                 
\newcommand{\dom}{\operatorname{dom}}            
\newcommand{\plotdom}[1]{U_{#1}}                      
\newcommand{\Lan}{\operatorname{Lan}}                   
\newcommand{\Ran}{\operatorname{Ran}}                   
\newcommand{\id}{\operatorname{id}}
\newcommand{\nattrans}{\operatorname{\eta}^R}
\newcommand{\germ}{G}                                
\newcommand{\plots}{{\mathrm {Plots}}}                           
\newcommand{\internalfunctor}{T}                     
\newcommand{\internal}[2]{T_{#1} (#2)}               
\newcommand{\rightfunctor}{\hat{T}^R}
\newcommand{\righttangent}[2]{\hat{T}^R_{#1} (#2)}
\newcommand{\vincenttangent}[2]{\hat{T}^{\mathrm{Vin}}_{#1} (#2)}
\newcommand{\generalinternal}[3]{T^{#1}_{#2} (#3)}
\newcommand{\generalright}[3]{\hat{T}^{#1}_{#2} (#3)}
\newcommand{\generalinternalfunctor}[1]{T^{#1}}
\newcommand{\generalrightfunctor}[1]{\hat{T}^{#1}}
\newcommand{\Hom}{\mathrm{Hom}}
\begin{document}

\begin{abstract}
We study tangent spaces in the setting of diffeological spaces. Several distinct tangent functors have been introduced, each of which extends the classical tangent functor from smooth manifolds. In this paper, we construct infinitely many non-isomorphic tangent functors on diffeological spaces. We compare our constructions with existing models, including the internal and external tangent spaces. Our results show that the choice of tangent functor is far from unique outside smooth manifolds.
\end{abstract}

\maketitle

\section{Introduction}

The tangent functor is a fundamental construction in differential geometry, 
providing the foundation for vector fields, flows, and differential forms.

In the broader setting of diffeological spaces introduced by Souriau \cite{Souriau},
it is natural to define a tangent functor that extends the classical tangent functor.
In this paper, a \emph{tangent functor} means a functor 
$T\colon \dflgloc\to\vect$ that agrees with the classical tangent functor on smooth manifolds. 
Here $\dflgloc$ is the category of based diffeological spaces with morphisms given by germs of basepoint-preserving smooth maps, and 
$\vect$ is the category of real vector spaces and linear maps. 
See \Cref{section:tangent-functors} for the precise axioms.
Even with this setup, such extensions are far from unique, and several non-isomorphic constructions have already been proposed.
An early construction, due to Iglesias-Zemmour \cite{IZ}, is based on differential 1-forms. 
Vincent \cite{vincent} later proposed another construction, which is closely related to the internal and external tangent spaces introduced by Christensen-Wu \cite{CW} and can be seen as an intermediate construction between them.
Further variants of the external construction were given in \cite{taho}.

\medskip

While previous work has produced only finitely many non-isomorphic tangent functors, 
we show that the variety of tangent functors is far richer than previously known.
Specifically, for each based diffeological space $(Y,y)$ with a nontrivial tangent vector at $y$, 
we construct two tangent functors, both of which agree with the classical tangent functor on smooth manifolds:
\[
  \generalinternalfunctor{(Y,y)},\ \generalrightfunctor{(Y,y)}\colon\dflgloc\longrightarrow\vect.
\]

To illustrate the diversity of such functors, 
we take different test spaces $(Y,y)$ for the two constructions. 
For the first construction, we take $Y$ to be the irrational torus
\[
T_\alpha=\mathbb{R}/(\mathbb{Z}+\alpha\mathbb{Z}), \quad \text{where }\alpha\in\mathbb{R}\setminus\mathbb{Q}.
\]
This choice yields uncountably many pairwise non-isomorphic tangent functors.
For the second construction, setting $(Y,y)=(H_n,0)$ with $H_n=\mathbb{R}^n/O(n)$
also gives infinitely many non-isomorphic tangent functors.

\begin{restatable*}{thm}{maintheoremone}\label{thm:mainone}
  Let $\alpha,\beta\in\mathbb{R}\setminus\mathbb{Q}$.
  Then for any $x\in T_\alpha$ and $y\in T_\beta$, we have
  \[
    \generalinternal{(T_\beta, y)}{x}{T_\alpha} \cong
    \begin{cases}
      \mathbb{R}, & \text{if there exists a nonconstant smooth map } T_\alpha\to T_\beta,\\[2mm]
      0, & \text{otherwise.}
    \end{cases}
  \]
  Equivalently, by \Cref{ex:irrational-torus}, we have
  \[
    \generalinternal{(T_\beta, y)}{x}{T_\alpha}\ \cong\
    \begin{cases}
      \mathbb{R}, & \text{if }\ \alpha=\dfrac{a+b\,\beta}{c+d\,\beta}\ \text{for some }a,b,c,d\in\mathbb{Z},\\[3mm]
      0, & \text{otherwise.}
    \end{cases}
  \]
\end{restatable*}

\begin{restatable*}{thm}{maintheoremtwo}\label{thm:maintwo}
  Let $n,m\in\mathbb{Z}_{\geq 1}$.
  Then we have
  \[
    \generalright{(H_m,0)}{0}{H_n} \cong
    \begin{cases}
      \mathbb{R}, & \text{if } m\leq n,\\[2mm]
      0, & \text{if } m>n.
    \end{cases}
  \]
\end{restatable*}

These results show that, unlike in the manifold case where the tangent functor is essentially unique, 
the notion of a tangent functor on diffeological spaces requires more careful treatment.


\section{Diffeological spaces and tangent spaces}\label{section:diffeology}

We briefly recall the notions of diffeological spaces and tangent spaces on diffeological spaces.
For a detailed exposition, see \cite{IZ}, \cite{CW}, \cite{vincent}, \cite{taho}.

\subsection{Diffeological spaces}
\begin{defi}[\cite{IZ}, 1.5] \label{definition diffeology}
    Let $X$ be a set. A \emph{parametrization} of $X$ is a map $U\to X$ where $U$ is an open set of $\mathbb{R}^n$ for some $n\in\mathbb{Z}_{\geq 0}$.
    A \emph{diffeology} on $X$ is a set of parametrizations $\mathscr{D}_X$ (we call an element of $\mathscr{D}_X$ a plot) such that the following three axioms are satisfied:
    \begin{description}
      \item[Covering] Every constant parametrization $U\to X$ is a plot.
      \item[Locality] Let $p\colon U\to X$ be a parametrization. If there is an open covering $\{U_{\alpha}\}$ of $U$ such that ${p|}_{U_{\alpha}}\in \mathscr{D}_X$ for all $\alpha$, then $p$ itself is a plot.  
      \item[Smooth compatibility] For every plot $p\colon U\to X$, every open set $V$ in $\mathbb{R}^m$ and $f\colon V\to U$ that is smooth as a map between Euclidean spaces, $p\circ f\colon V\to X$ is also a plot.
    \end{description}
    A diffeological space is a set equipped with a diffeology.
    We usually write $X$ for the diffeological space $(X,\mathscr{D}_X)$.
\end{defi}

\begin{defi}[\cite{IZ}, 1.14]
A map $f\colon X\to Y$ between diffeological spaces is \emph{smooth} if for every plot $p\colon U\to X$, the composition $f\circ p$ is a plot of $Y$.
\end{defi}

Diffeological spaces and smooth maps form a category $\dflg$ that contains smooth manifolds as a full subcategory.

\begin{ex}
If $M$ is a smooth manifold, the set of all smooth maps $U\to M$ from Euclidean open sets $U$ forms a diffeology, called the \emph{standard diffeology} on $M$.
\end{ex}

\begin{defi}[\cite{IZ}, 2.8]\label{def:D-topology}
  Let $X$ be a diffeological space. The \emph{$D$-topology} on $X$
  is the finest topology that makes all plots $p\colon U\to X$ continuous.
\end{defi}  

\begin{defi}[\cite{IZ}, 1.50]\label{def:quotient}
Let $X$ be a diffeological space and $\sim$ an equivalence relation on $X$.  
The \emph{quotient diffeology} on $Y=X/\!\sim$ is the smallest diffeology (with respect to inclusion of plots) for which the quotient map $\pi\colon X\to Y$ is smooth.  
Equivalently, a parametrization $p\colon U\to Y$ is a plot if and only if each point of $U$ has a neighborhood $U'$ such that $p|_{U'}=\pi\circ q$ for some plot $q\colon U'\to X$.
\end{defi}

\begin{ex}[Irrational tori, {\cite[Exercise~4]{IZ}}]\label{ex:irrational-torus}
  Let $\alpha\in\mathbb{R}\setminus\mathbb{Q}$.  
  The \emph{irrational torus} of slope $\alpha$ is the quotient diffeological space
  \[
    T_\alpha=\mathbb{R}/(\mathbb{Z}+\alpha\mathbb{Z})
  \]
  equipped with the quotient diffeology induced from $\mathbb{R}$.  
  The following facts are established in {\cite[Exercise~4]{IZ}}:
  \begin{itemize}
    \item
    Every smooth map $f\colon T_\alpha\to\mathbb{R}$ is constant, i.e.,
    $C^\infty(T_\alpha,\mathbb{R})\cong\mathbb{R}$. 
    Indeed, for the quotient projection $\pi_\alpha\colon \mathbb{R}\to T_\alpha$,
    the composition $g=f\circ\pi_\alpha$ satisfies $g(t+1)=g(t)$ and $g(t+\alpha)=g(t)$,
    which forces $g$ to be constant since $\mathbb{Z}+\alpha\mathbb{Z}$ is dense in $\mathbb{R}$.
    \item
    For $\alpha, \beta\in \mathbb{R}\setminus \mathbb{Q}$, every smooth map $f\colon T_\alpha\to T_\beta$ arises from an affine map 
    $F:\mathbb{R}\to\mathbb{R}$ such that
    $\pi_\beta\circ F = f\circ \pi_\alpha$. Moreover, $f$ is constant if and only if $F$ is constant.
    There exists a nonconstant smooth map $T_\alpha\to T_\beta$ if and only if
    \[
      \alpha=\frac{a+b\,\beta}{c+d\,\beta}
    \]
    for some $a,b,c,d\in\mathbb{Z}$. 
    \item
    Two irrational tori $T_\alpha$ and $T_\beta$ are diffeomorphic if and only if
    there exist $a,b,c,d\in\mathbb{Z}$ with
    \[
      \alpha=\frac{a+b\,\beta}{c+d\,\beta},\quad \text{ and }\quad ad-bc=\pm1.
    \]
  \end{itemize}
\end{ex}

\medskip

\subsection{The internal tangent spaces}

We now recall the internal tangent spaces introduced by Christensen-Wu \cite{CW}. 
Let $\dflgloc$ denote the category whose objects are based diffeological spaces $(X,x)$
and whose morphisms are germs (with respect to the $D$-topology) of basepoint-preserving smooth maps.
Let $\enclzeroloc$ be the full subcategory of $\dflgloc$
whose objects are pairs $(U,0)$, where $U$ is an open subset of some $\mathbb{R}^n$.
Note that $\enclzeroloc$ can equivalently be regarded as
the full subcategory of $\dflgloc$ consisting of based smooth manifolds. 
For a based diffeological space $(X,x)$, we define the category $\plots(X,x)$ by
\[
  \plots(X,x)=\enclzeroloc\!\downarrow\!(X,x),
\]
whose objects are germs of plots $p\colon (\plotdom{p},0)\to(X,x)$ centered at $x$.
Let $T^{\enclzeroloc}\colon\enclzeroloc\to\vect$ denote the classical tangent functor at the origin. See \cite[3.1]{taho} for detailed definitions of these categories and functors; for categorical background, see also Mac Lane \cite{maclane}.

\begin{defi}[Internal tangent space, {\cite[Def.~3.1]{CW}}]\label{def:internal}
The \emph{internal tangent functor} $\internalfunctor\colon\dflgloc\to\vect$ is the left Kan extension of $T^{\enclzeroloc}\colon\enclzeroloc\to\vect$ along the inclusion functor $\inclzeroloc\colon\enclzeroloc\to\dflgloc$, that is,
\[
  \internalfunctor(X,x) = \Lan_{\inclzeroloc} T^{\enclzeroloc}(X,x).
\]
Concretely, the pointwise formula of the left Kan extension yields
\[
  \internalfunctor(X,x) = \bigoplus_{p\in\plots(X,x)} T_0 (\plotdom{p})\;\Big/\;R,
\]
where
\[
  R=\langle v-w\mid \text{$v\in T_0 (\plotdom{p})$, $w\in T_0 (\plotdom{q})$, $T(f)v=w$, with $p=q\circ f$.}\rangle_{\mathrm{vect}}.
\]
Here $\langle\cdot\rangle_{\mathrm{vect}}$ denotes the vector subspace generated by the indicated elements.
We denote the vector space $\internalfunctor(X,x)$ by $\internal{x}{X}$ and call it the \emph{internal tangent space of $X$ at $x$}.
\end{defi}

For a plot $p\colon \plotdom{p}\to X$ centered at $x$ and a vector $v\in T_0(\plotdom{p})$, 
we write $[p,v]\in \internal{x}{X}$ for the class of $v\in \bigoplus_{p\in\plots(X,x)} T_0 (\plotdom{p})$ in the above quotient.

\begin{prop}[{\cite[Prop.~3.3]{CW}}]\label{prop:curves}
    The internal tangent space $\internal{x}{X}$ is generated by elements of the form $[c,\tfrac{\partial}{\partial t}]$, 
    where $c\colon (\mathbb{R},0)\to (X,x)$ is a smooth curve with $c(0)=x$ and $\tfrac{\partial}{\partial t}\in T_0(\mathbb{R})$ is the unit tangent vector at $0$.
\end{prop}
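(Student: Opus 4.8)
The plan is to exploit the defining relation $R$ to show that every generator of the internal tangent space already coincides with a class coming from a curve. By construction, $\internal{x}{X}$ is spanned by the classes $[p,v]$ with $p\colon(\plotdom{p},0)\to(X,x)$ a plot and $v\in T_0(\plotdom{p})$, since the underlying direct sum $\bigoplus_{p}T_0(\plotdom{p})$ is spanned by such elements. Hence it suffices to rewrite each $[p,v]$ as a class $[c,\tfrac{\partial}{\partial t}]$ for a suitable curve $c$.

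First I would fix a generator $[p,v]$ and write $n$ for the integer with $\plotdom{p}\subseteq\mathbb{R}^n$, so that $v\in T_0(\plotdom{p})\cong\mathbb{R}^n$. Regarding $v$ as a vector in $\mathbb{R}^n$, I would introduce the linear germ $\gamma\colon(\mathbb{R},0)\to(\plotdom{p},0)$ given by $\gamma(t)=tv$; this is a legitimate basepoint-preserving smooth germ because $\plotdom{p}$ is open and contains $0$, so that $tv\in\plotdom{p}$ for all $t$ near $0$. I would then set $c=p\circ\gamma\colon(\mathbb{R},0)\to(X,x)$, which is a smooth curve with $c(0)=x$, i.e. precisely an object of $\plots(X,x)$ of the form considered in the statement.

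The key step is to apply the relation $R$ to the factorization $c=p\circ\gamma$. In the notation of the definition of $R$, this exhibits $c$ as a plot that factors through the base plot $p$ via the smooth germ $\gamma$, so that $[c,w]=[p,T(\gamma)w]$ for every $w\in T_0(\mathbb{R})$. Evaluating at the unit vector $w=\tfrac{\partial}{\partial t}$ and using $T(\gamma)\tfrac{\partial}{\partial t}=\gamma'(0)=v$, I obtain $[c,\tfrac{\partial}{\partial t}]=[p,v]$. Since $p$ and $v$ were arbitrary, the curve classes $[c,\tfrac{\partial}{\partial t}]$ span $\internal{x}{X}$, which is exactly the assertion.

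I expect the only subtle points to be bookkeeping rather than genuine obstacles: one must check that $\gamma$ is well defined as a germ, which is handled by the openness of $\plotdom{p}$, and one must read the relation $R$ in the correct direction, namely that the factorization $c=p\circ\gamma$ identifies $[c,w]$ with $[p,T(\gamma)w]$ and not the reverse. The identity $T(\gamma)\tfrac{\partial}{\partial t}=v$ is merely the computation of the derivative of the linear map $t\mapsto tv$ at the origin.
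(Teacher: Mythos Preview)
Your argument is correct and is precisely the standard one: factor an arbitrary generator $[p,v]$ through the linear curve $\gamma(t)=tv$ and invoke the relation $R$ for the factorization $c=p\circ\gamma$ to get $[c,\tfrac{\partial}{\partial t}]=[p,T(\gamma)\tfrac{\partial}{\partial t}]=[p,v]$. The paper itself does not supply a proof of this proposition; it merely states it with a citation to \cite[Prop.~3.3]{CW}, so there is nothing further to compare.
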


\subsection{The right tangent spaces}\label{subsec:global-right}

We next introduce the \emph{right tangent space}, 
a dual construction to the internal one in \Cref{def:internal}. 
This space can be described as the value of the right Kan extension of the classical tangent functor $T^{\enclzeroloc}\colon\enclzeroloc\to\vect$.

\begin{defi}[Right tangent space, {\cite[Def.~4.2]{taho}}]\label{def:global-right}
  Let $(X,x)$ be a based diffeological space.
  Let $\germ(X,x)$ denote the $\mathbb{R}$-algebra of germs of smooth functions at $x$.
  A map $D\colon \germ(X, x)\to\mathbb{R}$ is called a \emph{right tangent vector}
  at $x$ if it satisfies the following conditions:
  \begin{description}
    \item[Linearity]
      $D$ is $\mathbb{R}$-linear.
    \item[Leibniz rule]
      For all $g,h\in \germ(X,x)$, we have $D(gh)=g(x)D(h)+h(x)D(g)$.
  \end{description}
  The set of all right tangent vectors forms a vector space,
  denoted by $\righttangent{x}{X}$ and called the \emph{right tangent space} of $X$ at $x$.  
\end{defi}

\begin{remark}
    The right tangent space is an analog of the external tangent space \cite{CW},
    though its definition differs slightly.
    For a detailed comparison, we refer the reader to \cite{taho}.
\end{remark}  

The above definition coincides with the right Kan extension of the classical tangent functor:
\begin{prop}[{\cite[Prop.~4.15]{taho}}]\label{prop:global-right-kan}
    For any based diffeological space $(X,x)$, there is a natural isomorphism of vector spaces
    \[
      \Ran_{\inclzeroloc} T^{\enclzeroloc} (X,x) \cong \righttangent{x}{X}.
    \]
\end{prop}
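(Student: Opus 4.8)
The plan is to compute the pointwise right Kan extension directly as a limit and to identify that limit with the space of derivations defining $\righttangent{x}{X}$. Recall that the pointwise formula expresses $\Ran_{\inclzeroloc} T^{\enclzeroloc}(X,x)$ as the limit of the diagram over the comma category $(X,x)\downarrow\inclzeroloc$ that sends each object to the tangent space of its Euclidean target. Concretely, an object is a germ $f\colon(X,x)\to(U,0)$ of a basepoint-preserving smooth map into an open set $U\subseteq\mathbb{R}^n$, a morphism $f\to g$ is a germ $h\colon(U,0)\to(V,0)$ with $h\circ f=g$, and the diagram assigns $T_0(U)$ to $f$ and $T_0(h)$ to $h$. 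An element of the limit is therefore a family $(\xi_f)_f$ with $\xi_f\in T_0(U)$ satisfying $T_0(h)(\xi_f)=\xi_{h\circ f}$ for every morphism $h$. Throughout I will use the standard identification of $T_0(U)$ with the space of point-derivations $\germ(U,0)\to\mathbb{R}$ at the origin.

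First I would construct a linear map $\Phi\colon\righttangent{x}{X}\to\Ran_{\inclzeroloc}T^{\enclzeroloc}(X,x)$. Given a right tangent vector $D$ and an object $f\colon(X,x)\to(U,0)$, pullback along $f$ gives an $\mathbb{R}$-algebra homomorphism $f^{*}\colon\germ(U,0)\to\germ(X,x)$, and I set $\xi_f:=D\circ f^{*}$. A short computation with the Leibniz rule shows that $D\circ f^{*}$ is a point-derivation at $0\in U$, hence an element of $T_0(U)$. Compatibility is immediate from functoriality of pullback: since $f^{*}\circ h^{*}=(h\circ f)^{*}$, for $h\circ f=g$ one has $(D\circ f^{*})\circ h^{*}=D\circ g^{*}$, and under the identification of tangent vectors with derivations $T_0(h)$ is precisely precomposition with $h^{*}$, so $T_0(h)(\xi_f)=\xi_g$. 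Thus $\Phi(D)=(D\circ f^{*})_f$ is a well-defined cone, i.e. an element of the limit.

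Next I would construct the inverse $\Psi$. Given a compatible family $(\xi_f)$, define $\Psi((\xi_f))\colon\germ(X,x)\to\mathbb{R}$ by sending a germ $g$ to the real number corresponding to $\xi_{g-g(x)}\in T_0(\mathbb{R})\cong\mathbb{R}$, where $g-g(x)\colon(X,x)\to(\mathbb{R},0)$ is the associated object with Euclidean target $\mathbb{R}$. The crucial point, and the step I expect to be the main obstacle, is to verify that this map is a derivation, since nothing about $\germ(X,x)$ is available beyond the limit data. Linearity follows by factoring through the object $(g_1-g_1(x),\,g_2-g_2(x))\colon(X,x)\to(\mathbb{R}^{2},0)$ and applying compatibility to the linear map $\mathrm{add}\colon\mathbb{R}^{2}\to\mathbb{R}$ and the two projections, using that $T_0$ of a linear map is that same map. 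For the Leibniz rule I would factor $g_1g_2-(g_1g_2)(x)$ through the same object composed with the smooth map $\mu(s,t)=(s+g_1(x))(t+g_2(x))-g_1(x)g_2(x)$; since the first-order part of $\mu$ at the origin is $g_2(x)\,ds+g_1(x)\,dt$, compatibility yields $T_0(\mu)(\xi_f)=g_2(x)D(g_1)+g_1(x)D(g_2)$, which is exactly the Leibniz identity. This first-order computation with the multiplication map is where the argument genuinely uses the structure of the tangent functor.

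Finally I would check that $\Phi$ and $\Psi$ are mutually inverse and natural. For $\Psi\circ\Phi=\id$, evaluating $\Phi(D)$ at the object $g-g(x)$ and pairing with the coordinate function recovers $D(g)-g(x)D(1)=D(g)$, using $D(1)=0$. For $\Phi\circ\Psi=\id$, it suffices to match the two tangent vectors $D\circ f^{*}$ and $\xi_f$ on the coordinate functions $u^{i}$ of $U$; applying compatibility to the coordinate projections $u^{i}\colon(U,0)\to(\mathbb{R},0)$ reduces $\xi_f(u^{i})$ to $\xi_{u^{i}\circ f}=D(u^{i}\circ f)=(D\circ f^{*})(u^{i})$, and since a tangent vector at $0$ is determined by its action on coordinates the two agree. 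Naturality in $(X,x)$ is then routine: a germ $\phi\colon(X,x)\to(X',x')$ acts on both sides by precomposition with $\phi^{*}$, and $\Phi$ visibly intertwines these, so the isomorphism is natural and the proof is complete.
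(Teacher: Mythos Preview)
The paper does not actually prove this proposition; it is stated with a citation to \cite[Prop.~4.15]{taho} and no argument is given in the present paper, so there is no in-paper proof to compare your attempt against.

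That said, your argument is correct and is the standard computation one would expect to find in the cited reference: you use the pointwise limit formula for the right Kan extension over the comma category $(X,x)\downarrow\inclzeroloc$, identify each $T_0(U)$ with point-derivations, and then pass back and forth between compatible families and derivations on $\germ(X,x)$ by probing with real-valued germs. The verifications of linearity and the Leibniz rule via factoring through $(\mathbb{R}^2,0)$ and composing with the addition map and the shifted multiplication map $\mu$ are exactly the right device, and your check that $\Phi\circ\Psi=\id$ by testing on coordinate functions is the clean way to close the loop. Nothing is missing.
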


\subsection{Vincent-type tangent spaces}

By the universality of the Kan extension, there is a canonical natural transformation between functors from $\dflgloc$ to $\vect$:
\[
  \nattrans=\{\nattrans_{(X,x)}\}\colon \internalfunctor \Longrightarrow \rightfunctor,
  \qquad
  \nattrans_{(X,x)}([p,v])(f)=d(\tilde f\circ p)_0(v),
\]
where $\nattrans_{(X,x)}([p,v])$ acts on each germ $f\in\germ(X,x)$
through any representative $\tilde f$ of $f$ near $x$.
We define another tangent functor as the coimage of this natural transformation.

\begin{defi}[Vincent-type tangent space]\label{def:vincent}
  For a based diffeological space $(X,x)$, the \emph{Vincent-type tangent space} is defined by
  \begin{align*}
    \vincenttangent{x}{X}
    &= \mathrm{Coim}\bigl(\nattrans_{(X,x)}\bigr)
       = \bigoplus_{p\in\plots(X,x)} T_0(\plotdom{p}) \;\Big/\; R_{\mathrm{Vin}}\\
    &\cong \mathrm{Im}\bigl(\nattrans_{(X,x)}\bigr)
       = \bigl\langle\,  D_p \;\big|\;
           p\colon (\mathbb{R},0)\to (X,x)\ \text{smooth} \,\bigr\rangle _{\mathrm{vect}}
       \subset \righttangent{x}{X}.
  \end{align*}
  Here the relation $R_{\mathrm{Vin}}$ is given by
  \[
    R_{\mathrm{Vin}}
      = \left\langle v-w \;\middle|\;
         v\in T_0(\plotdom{p}),\; w\in T_0(\plotdom{q}),\;
         d(f\circ p)(v) = d(f\circ q)(w)
         \ \text{for all } f\in\germ(X, x)
        \right\rangle_{\mathrm{vect}},
  \]
  and each $D_p\in \righttangent{x}{X}$ is the derivation defined by
  \[
    D_p(f)=\frac{d(f\circ p)}{dt}(0) \quad\text{for } f\in\germ(X, x).
  \]
\end{defi}

\begin{remark}[Relation to Vincent's definition]
If $\germ(X,x)$ in the definition of $R_{\mathrm{Vin}}$ is replaced by $C^\infty(X,\mathbb{R})$, the resulting quotient coincides with Vincent's tangent space~\cite[Def.~3.3.1]{vincent}.
\end{remark}  

\begin{remark}[Separation by germs]
  Fix $f\in\germ(X, x)$ and define a linear functional
  \[
    L_f\colon \bigoplus_{p\in\plots(X,x)} T_0(\plotdom p)\longrightarrow \mathbb{R},
    \qquad
    L_f|_{T_0(\plotdom p)}(v)= d(f\circ p)(v).
  \]
  By the definition of $R_{\mathrm{Vin}}$, every generator $v-w$ of $R_{\mathrm{Vin}}$
  satisfies $d(f\circ p)(v)=d(f\circ q)(w)$ for this fixed $f$,
  hence $L_f(v-w)=0$. Thus $L_f$ descends to a well-defined linear map
  \[
    \overline{L_f}\colon \vincenttangent{x}{X}
    =\Big(\bigoplus_{p} T_0(\plotdom p)\Big)\Big/ R_{\mathrm{Vin}}
    \longrightarrow \mathbb{R},
    \qquad
    \overline{L_f}([p,v])=d(f\circ p)(v).
  \]
  Consequently, if there exists $f\in \germ(X, x)$ such that
  $d(f\circ p)(v)\neq d(f\circ q)(w)$, then we have
  $\overline{L_f}([p,v])\neq \overline{L_f}([q,w])$,
  and hence we obtain $[p,v]\neq [q,w]$ in $\vincenttangent{x}{X}$.
  Equivalently, $[p,v]=[q,w]$ implies
  $d(f\circ p)(v)=d(f\circ q)(w)$ for all $f\in\germ(X, x)$.
\end{remark}

    \subsection{Examples}
    Before introducing our new construction, we recall how these tangent spaces behave on some basic diffeological spaces.
    \begin{ex}[Manifolds, {\cite[Example~3.16]{CW}, \cite[Example~26]{vincent}}]\label{ex:manifolds}
    Let $M$ be a smooth manifold and $x\in M$ with the standard diffeology. Then we have
    \[
    \internal{x}{M} \cong\vincenttangent{x}{M}\cong \righttangent{x}{M}.
    \]
    This follows from the fact that the inclusion functor 
    $i=\inclzeroloc\colon\enclzeroloc\hookrightarrow\dflgloc$ 
    is fully faithful.
    \end{ex}
    
    \begin{ex}[Irrational torus, {\cite[Example~3.23]{CW}}]\label{ex:int-vincent-torus}
        Let $T_\alpha$ be the irrational torus of slope $\alpha$, as defined in \Cref{ex:irrational-torus}, and let $x\in T_\alpha$. Then we have
        \[
          \internal{x}{T_\alpha} \cong \mathbb{R},
          \qquad
          \vincenttangent{x}{T_\alpha} = 0,
          \qquad
          \righttangent{x}{T_\alpha} = 0.
        \]
    \end{ex}
    
    \begin{proof}
        For the internal tangent space, {\cite[Example~3.23]{CW}} proves the isomorphism 
        and shows that the class $[\pi_\alpha,\partial/\partial t]$ serves as a generator, 
        where $\pi_\alpha\colon \mathbb{R}\to T_\alpha$ is the quotient map. 
        
        For the right tangent space, it is also shown in {\cite[Example~3.23]{CW}}
        that it vanishes.  
        Since $\vincenttangent{x}{T_\alpha}$ is a subspace of $\righttangent{x}{T_\alpha}$,
        it follows immediately that $\vincenttangent{x}{T_\alpha}=0$.
    \end{proof}

    \begin{ex}[Orbit space of the orthogonal group, {\cite[Example~3.24]{CW}}]\label{ex:orbit-space}
      Let $H_n = \mathbb{R}^n / O(n)$ be the orbit space equipped with the quotient diffeology, 
      and let $0\in H_n$ be the image of the origin. Then we have
      \[
        \internal{0}{H_n} = 0,
        \qquad
        \vincenttangent{0}{H_n} = 0,
        \qquad
        \righttangent{0}{H_n} \cong \mathbb{R}.
      \]
      \end{ex}
      
      \begin{proof}
        It is shown in {\cite[Example~3.24]{CW}} that the internal tangent space of $H_n$ vanishes at $0$,
        while the external tangent space (which coincides with the right tangent space in this case)
        is one-dimensional.
        Moreover, {\cite[Example~3.24]{CW}} identifies a generator $D_n$ of $\righttangent{0}{H_n}$ by
        \[
          D_n(f)
          \;=\;
          \frac{1}{2}\,
          \frac{\partial^2 \tilde f}{\partial x_1^2}(0),
        \]
        where $\tilde f\colon \mathbb{R}^n \to \mathbb{R}$ is any $O(n)$-invariant smooth function satisfying
        $f([x]) = \tilde f(x)$.
        In particular, for $q_n\colon H_n\to \mathbb{R};[x]\mapsto\|x\|^2$, we have $D_n(q_n) = 1$.
        Since the Vincent-type tangent space is defined as the image of the natural transformation
        $\internalfunctor \Rightarrow \rightfunctor$,
        it follows that $\vincenttangent{0}{H_n} = 0$.
      \end{proof}


\section{Infinitely many variations of tangent functors} \label{section:tangent-functors}

In \Cref{section:diffeology}, we reviewed three distinct constructions of tangent
spaces on diffeological spaces. All of these constructions extend the classical tangent functor on smooth manifolds, and thus fit into the following general notion.

\begin{defi}[Tangent functor]\label{def:tangentfunctor}
  A \emph{tangent functor} is a functor
  \[
  T \colon \dflgloc \longrightarrow \vect
  \]
  whose restriction to the full subcategory $\enclzeroloc\subset\dflgloc$
  is naturally isomorphic to the classical tangent functor $T^{\enclzeroloc}\colon\enclzeroloc\to\vect$.
\end{defi}  

By \Cref{ex:manifolds}, all three functors in \Cref{section:diffeology} are tangent functors.
Our goal in this section is to construct infinitely many tangent functors
satisfying \Cref{def:tangentfunctor}. 

\subsection{Tangent functors defined by test spaces}

In the Vincent-type construction, the real line $\mathbb{R}$ serves as the basic test space.
Replacing it by an arbitrary based diffeological space $(Y,y)$, we obtain new families of
tangent functors.

\begin{defi}[$(Y,y)$-internal tangent functor]\label{def:Yinternal}
Let $(X,x)$ and $(Y,y)$ be based diffeological spaces.
    The \emph{$(Y,y)$-internal tangent space} of $(X,x)$ is the vector space
    \[
      \generalinternal{(Y,y)}{x}{X}
      = \bigoplus_{p\in\plots(X,x)} T_0(\plotdom{p}) \;\Big/\; R_{(Y,y)},
    \]
    where
    \[
      R_{(Y,y)}
      = \left\langle
        v-w \;\middle|\;
        \begin{aligned}
          &v\in T_0(\plotdom{p}),\; w\in T_0(\plotdom{q}),\\
          &\internalfunctor(f)[p,v] = \internalfunctor(f)[q,w]
            \text{ for all } f\in\Hom_{\dflgloc}((X,x),(Y,y))
        \end{aligned}
      \right\rangle_{\mathrm{vect}}.
    \]
    Here $[p,v]\in \internal{x}{X}$ denotes the class represented by the plot 
    $p:(\plotdom p,0)\to (X,x)$ and the tangent vector $v\in T_0(\plotdom p)$. 
    The map $\internalfunctor(f)\colon \internal{x}{X}\to\internal{y}{Y}$ is the one 
    induced on internal tangent spaces. 
    We denote by $[p,v]_{(Y,y)} \in \generalinternal{(Y,y)}{x}{X}$ the image of 
    $v \in T_0(\plotdom p)$.

For a germ $g\colon (X,x)\to (X',x')$ in $\dflgloc$, define
\[
  \generalinternalfunctor{(Y,y)}(g)
  \colon
  \generalinternal{(Y,y)}{x}{X}
  \longrightarrow
  \generalinternal{(Y,y)}{x'}{X'}\,;\, 
  [p,v]_{(Y,y)}
  \mapsto[g\circ p, v]_{(Y,y)}.
\]
This is well-defined because if $[p,v]$ and $[q,w]$ are identified by $R_{(Y,y)}$, then for every
$h\colon (X',x')\to (Y,y)$ we have
\[
  \internalfunctor(h)[g\circ p,v]
  =
  \internalfunctor(h\circ g)[p,v]
  =
  \internalfunctor(h\circ g)[q,w]
  =
  \internalfunctor(h)[g\circ q,w],
\]
so $[g\circ p,v]$ and $[g\circ q,w]$ are again identified by $R_{(Y,y)}$.
Identities and composition are preserved by construction. Therefore $\generalinternalfunctor{(Y,y)}\colon \dflgloc\to \vect$ is a functor.
\end{defi}

\begin{defi}[$(Y,y)$-right tangent functor]\label{def:Yglobalright}
  Let $(X,x)$ and $(Y,y)$ be based diffeological spaces.
  The \emph{$(Y,y)$-right tangent space} of $(X,x)$ is defined as the linear subspace of $\righttangent{x}{X}$
  \[
    \generalright{(Y,y)}{x}{X}
    = \left\langle
      \rightfunctor(f)D \,\middle|\,
      f\in\Hom_{\dflgloc}((Y,y),(X,x)),\, D\in \righttangent{y}{Y} 
      \right\rangle_{\mathrm{vect}}\subset \righttangent{x}{X}. 
  \]
  Here the map $\rightfunctor(f)\colon \righttangent{y}{Y}\to\righttangent{x}{X}$ is the one 
  induced on right tangent spaces. 

For $g\colon (X,x)\to (X',x')$ in $\dflgloc$, set
\[
  \generalrightfunctor{(Y,y)}(g)
  =
  \rightfunctor(g)\big|_{\generalright{(Y,y)}{x}{X}}
  \,\colon\,
  \generalright{(Y,y)}{x}{X}\ \longrightarrow\ \generalright{(Y,y)}{x'}{X'}.
\]
This is well-defined because for each generator $\rightfunctor(f)D$, we have
\(
  \rightfunctor(g)\big(\rightfunctor(f)D\big)
  =
  \rightfunctor(g\circ f)D,
\)
which again lies in the span defining $\generalright{(Y,y)}{x'}{X'}$.
Since $\rightfunctor$ preserves identities and composition, the restriction inherits these properties. Therefore $\generalrightfunctor{(Y,y)}\colon \dflgloc\to \vect$ is a functor.
\end{defi}

\begin{remark}
When $(Y,y)=(\mathbb{R},0)$, both constructions recover the Vincent-type tangent space
(see \Cref{def:vincent}).
\end{remark}

When $(Y,y)$ has a nontrivial tangent vector, each of the above constructions 
coincides with the classical tangent functor on smooth manifolds.

\begin{prop}\label{prop:Y-internal-and-Y-global-are-tangent-functors}
  Let $(Y,y)$ be a based diffeological space.
  \begin{enumerate}
    \item[(i)] If $\dim \internal{y}{Y}\geq 1$, then $\generalinternalfunctor{(Y,y)}\colon \dflgloc\to\vect$ is a tangent functor (Def.~\ref{def:tangentfunctor}).
    \item[(ii)] If $\dim \righttangent{y}{Y}\geq 1$, then $\generalrightfunctor{(Y,y)}\colon \dflgloc\to\vect$ is a tangent functor (Def.~\ref{def:tangentfunctor}).
  \end{enumerate}
\end{prop}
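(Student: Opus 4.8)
The plan is to handle both parts by exhibiting, on the subcategory $\enclzeroloc$, a natural isomorphism between the new functor and the internal (resp.\ right) tangent functor, which already restricts to $T^{\enclzeroloc}$ by \Cref{ex:manifolds}. For part (i), I would first check that the internal relation $R$ is always contained in $R_{(Y,y)}$: a generator $v-w$ of $R$ forces $[p,v]=[q,w]$ in $\internal{x}{X}$, and this identity is preserved by every map $\internalfunctor(f)$ induced by $f\colon(X,x)\to(Y,y)$, so $v-w$ is also a generator of $R_{(Y,y)}$. Consequently the quotient maps assemble into a natural surjection $\internalfunctor\Rightarrow\generalinternalfunctor{(Y,y)}$, and it remains to prove that this surjection is injective on each object $(U,0)\in\enclzeroloc$.

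To see the injectivity, I would work inside $\internal{0}{U}\cong T_0(U)$ and show that no nonzero $u\in T_0(U)$ is annihilated by all induced maps $\internalfunctor(f)$ with $f\colon(U,0)\to(Y,y)$. The hypothesis $\dim\internal{y}{Y}\ge 1$ together with \Cref{prop:curves} provides a smooth curve $c\colon(\mathbb{R},0)\to(Y,y)$ with $[c,\partial/\partial t]\neq 0$. Given $u\neq 0$, I would pick a linear map $\phi\colon(U,0)\to(\mathbb{R},0)$ with $\phi(u)\neq 0$ and set $f=c\circ\phi$; then $\internalfunctor(f)(u)=\phi(u)\,[c,\partial/\partial t]\neq 0$. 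This forces every generator of $R_{(Y,y)}$ to already vanish in $\internal{0}{U}$, so the surjection is an isomorphism on $\enclzeroloc$, and hence $\generalinternalfunctor{(Y,y)}|_{\enclzeroloc}\cong T^{\enclzeroloc}$.

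For part (ii), the inclusions $\generalright{(Y,y)}{x}{X}\subseteq\righttangent{x}{X}$ are compatible with the induced maps by construction, giving a natural transformation $\generalrightfunctor{(Y,y)}\Rightarrow\rightfunctor$; since $\righttangent{0}{U}\cong T_0(U)$ on $\enclzeroloc$, I only need the generators $\rightfunctor(f)D$ to span all of $T_0(U)$. Fixing a nonzero $D_0\in\righttangent{y}{Y}$ (which exists by hypothesis) and a germ $h\in\germ(Y,y)$ with $h(y)=0$ and $D_0(h)\neq 0$, I would record the chain-rule identity $\rightfunctor(f)(D_0)=\sum_j D_0(f_j)\,\partial/\partial x_j|_0$, valid for any $f=(f_1,\dots,f_n)\colon(Y,y)\to(U,0)$ with each $f_j(y)=0$; this follows from the Leibniz rule, since products of germs vanishing at $y$ are killed by $D_0$. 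Taking $f^{(i)}$ with $i$-th component $h/D_0(h)$ and all others zero yields $\rightfunctor(f^{(i)})(D_0)=\partial/\partial x_i|_0$, so the generators span $T_0(U)$, the inclusion is an isomorphism on $\enclzeroloc$, and $\generalrightfunctor{(Y,y)}|_{\enclzeroloc}\cong T^{\enclzeroloc}$.

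The crux of both arguments is this third step, where the single nontriviality assumption on $(Y,y)$ must be leveraged into enough morphisms to recover the full classical tangent space: in (i) by pulling back a fixed nonzero internal vector along coordinate functionals, and in (ii) by pushing a fixed nonzero derivation forward along the coordinate maps built from $h$. The Leibniz computation underlying (ii) is the main technical point; the curve-plus-functional construction is its analogue for (i), and both are where I expect the real work to lie, rather than in the routine functoriality and naturality checks.
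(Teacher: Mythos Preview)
Your proof is correct and follows essentially the same strategy as the paper: for (i) you compose a coordinate functional with a curve representing a nonzero internal tangent vector of $Y$, exactly as the paper does, and for (ii) you push forward a fixed nonzero derivation along maps built from a germ $h$ that it does not annihilate. The only cosmetic difference is in (ii), where the paper packages the computation by noting that $D_0(-\circ h)$ is a derivation on $\germ(\mathbb{R},0)$, hence a scalar multiple of $\tfrac{d}{dt}\big|_{0}$, and then composes with an arbitrary curve $\gamma$ into $M$, whereas you unfold the chain rule in coordinates via Hadamard's lemma; the two constructions coincide when $\gamma$ is taken to be a coordinate axis.
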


\begin{proof}
  \textbf{(i)} 
  Let $(M,x)$ be a smooth manifold and suppose that $[p,v]\in\internal{x}{M}$ is nonzero.
  Since the internal tangent space $\internal{x}{M}$ is isomorphic to the standard tangent space, there exists $f\in\germ(M,x)$ with $f(x)=0$ and $d(f\circ p)_0(v)=df_x\bigl([p,v]\bigr)\neq 0$.
  Take a smooth curve $c\colon(\mathbb{R},0)\to(Y,y)$ with 
  $T(c)(\partial/\partial t)\neq0$ (which exists since $\dim\internal{y}{Y}\ge1$),
  and set $F=c\circ f$.
  Then we have
  \[
  T(F)[p,v]=T(c)\!\big(T(f)[p,v]\big)
  =T(c)\!\big(d(f\circ p)_0(v)\big)\neq0,
  \]
  and therefore $[p,v]\ne0$ in $\generalinternal{(Y,y)}{x}{M}$.
  Hence the natural surjection $\internal{x}{M}\to\generalinternal{(Y,y)}{x}{M}$ is also injective,
  and thus $\generalinternalfunctor{(Y, y)}$ is a tangent functor.
  
  \medskip
  \textbf{(ii)}
  Fix a smooth manifold $(M,x)$ and note that $\righttangent{x}{M}\cong T_xM$.
  By definition we have $\generalright{(Y,y)}{x}{M}\subset \righttangent{x}{M}$, so it suffices to prove the reverse inclusion.
  Pick $0\neq D\in\righttangent{y}{Y}$ and choose $s\in\germ(Y,y)$ with $s(y)=0$ and $D(s)=1$ (rescaling if necessary).
  For any $w\in T_xM$, take a smooth curve $\gamma:(\mathbb{R},0)\to(M,x)$ with $\gamma'(0)=w$ and set $f=\gamma\circ s\in\Hom_{\dflgloc}((Y,y),(M,x))$.
  Then for every $g\in\germ(M,x)$, we have
  \[
   (\rightfunctor(f)D)(g)=D\big((g\circ\gamma)\circ s\big).
  \]
  Since $D$ is a derivation on $\germ(Y,y)$, the composition $D(-\circ s)$ is a derivation on $\germ(\mathbb{R},0)$, hence is equal to $c\,\frac{d}{dt}\big|_{t=0}$ with $c=D(-\circ s)(\id_{\mathbb{R}})=D(s)$.
  Thus for any $h\in\germ(\mathbb{R},0)$ we have \(D(h\circ s)=h'(0)\,D(s)\).
  Applying this to $h=g\circ\gamma$ and using $D(s)=1$, we obtain
  \[
   (\rightfunctor(f)D)(g)=(g\circ\gamma)'(0)=dg_x(w).
  \]
  Hence $\rightfunctor(f)D$ is the derivation corresponding to $w$, proving $\righttangent{x}{M}\subset \generalright{(Y,y)}{x}{M}$, and thus $\generalrightfunctor{(Y, y)}$ is a tangent functor. 
\end{proof}

Different choices of the test space $(Y,y)$ in \Cref{def:Yinternal} and \Cref{def:Yglobalright} 
can lead to distinct tangent functors.
In particular, irrational tori already yield uncountably many pairwise non-isomorphic 
$(Y,y)$-internal tangent functors.

\maintheoremone

  \begin{proof}
    Fix $\alpha,\beta\in\mathbb{R}\setminus\mathbb{Q}$ and $x\in T_\alpha$.
    Consider the canonical surjection
    \[
    \rho\colon T_x(T_\alpha)\longrightarrow
    \generalinternal{(T_\beta, y)}{x}{T_\alpha},\quad
    [p,v]\longmapsto [p,v]_{T_\beta}.
    \]
    By \Cref{ex:int-vincent-torus}, the space $T_x(T_\alpha)\cong\mathbb{R}$ is
    one-dimensional and generated by the class $[\pi_\alpha,\partial/\partial t]$.
    Hence $\generalinternal{(T_\beta, y)}{x}{T_\alpha}$ is either $0$ or $\mathbb{R}$,
    depending on whether $\rho([\pi_\alpha,\partial/\partial t])$ vanishes.
    
    If there exists a nonconstant smooth map $f\colon T_\alpha\to T_\beta$,
    then by composing with a translation of $T_\beta$ we may assume $f(x)=y$.
    Then $f$ admits an affine lift $F(t)=at+b$ with $a,b\in\mathbb{R}$ and $a\neq 0$. 
    In this situation we obtain
    \[
      T(f)\bigl([\pi_\alpha,\partial/\partial t]\bigr)
      = [\pi_\beta\circ F,\partial/\partial t]
      = a\,[\pi_\beta,\partial/\partial t]\ \neq\ 0.
    \]
    Conversely, suppose every smooth $f\colon T_\alpha\to T_\beta$ is constant.
    Since the $D$-topology on $T_\alpha$ is indiscrete (\cite[Exercise~55]{IZ}), any smooth germ
    $(T_\alpha,x)\to(T_\beta,y)$ is represented by a global smooth map, and so $T(f)=0$ for all germs $f$.
    Therefore we have $\rho([\pi_\alpha,\partial/\partial t])=0$, which implies $\generalinternal{(T_\beta, y)}{x}{T_\alpha}=0$.    
  \end{proof}
  
    Combining the above theorem with the elementary observation that the equivalence relation generated by
    \[
      \alpha \sim \beta \quad\Longleftrightarrow\quad 
      \alpha = \frac{a+b\,\beta}{c+d\,\beta}\ \text{for some } a,b,c,d\in\mathbb{Z}
    \]
    has uncountably many classes on $\mathbb{R}\setminus\mathbb{Q}$,
    we conclude that there exist uncountably many tangent functors on diffeological spaces,
    pairwise not naturally isomorphic.

    \medskip
    Similarly, an analogous phenomenon occurs for the $(Y,y)$-right tangent functor, where the orbit spaces $\mathbb{R}^n/O(n)$ provide distinct examples.

\maintheoremtwo

\begin{proof}
  For each $k\geq 1$, define
\[
  q_k\colon H_k\longrightarrow \mathbb{R};[x]\mapsto \|x\|^2.
\]
By \Cref{ex:orbit-space}, we have $\righttangent{0}{H_k}\cong\mathbb{R}$, and a generator $D_k$
satisfies $D_k(q_k)\neq 0$.

\medskip
Fix $m\leq n$. Let $\iota:\mathbb{R}^m\hookrightarrow\mathbb{R}^n$ be the isometric embedding $\iota(x)=(x,0)$ and let $\bar\iota:H_m\to H_n$ be the induced smooth map of orbit spaces with $\bar\iota(0)=0$. Since $q_n\circ\bar\iota=q_m$, we have
\[
 (\rightfunctor(\bar\iota)D_m)(q_n)
 = D_m(q_n\circ\bar\iota)
 = D_m(q_m)\neq 0,
\]
so $\rightfunctor(\bar\iota)D_m\neq 0$ in $\righttangent{0}{H_n}$. Therefore, we obtain the chain of inclusions
\[
   \mathbb{R}\cong \langle\,\rightfunctor(\bar\iota)D_m\,\rangle
   \subset \generalright{(H_m,0)}{0}{H_n}
   \subset \righttangent{0}{H_n}\cong \mathbb{R}.
\]

\medskip
Now suppose $m>n$. Let $\pi_k\colon \mathbb{R}^k\to H_k=\mathbb{R}^k/O(k)$ denote the quotient map for each $k$. Let $f\colon (H_m,0)\to (H_n,0)$ be any smooth germ. 
By the definition of the quotient diffeology, there exists an open neighborhood $U\subset\pi_m^{-1}(\dom(f))$ of $0$ and a smooth lift
\[
  F\colon U\to\mathbb{R}^n
  \quad\text{with}\quad
  \pi_n\circ F = f\circ \pi_m \ \text{ on } U.
\]
Then $(q_n\circ f)\circ \pi_m(x)=\|F(x)\|^2$ on $U$.
The function $r\mapsto \|F(r,0,\dots,0)\|^2$ on $\{r\in\mathbb{R}\mid (r,0,\dots,0)\in U\,\}$ is smooth and even,
so there exists a smooth $\Psi\colon\mathbb{R}\to\mathbb{R}$ such that
$\|F(r,0,\dots,0)\|^2=\Psi(r^2)$.
Since $\|F(x)\|^2$ is $O(m)$-invariant, we have
\[
  \|F(x)\|^2=\|F(\|x\|,0,\dots,0)\|^2=\Psi(\|x\|^2)\qquad (x\in U).
\]
We write the Taylor expansions at $0$ as
\[
  F(x)=Ax+o(\|x\|)\quad (x\to 0),\qquad \Psi(t)=at+o(t)\quad (t\to 0).
\]
Comparing quadratic terms gives, for all $x\in\mathbb{R}^m$,
\[
  {}^t\!x\,({}^t\!A A)\,x = a\|x\|^2,
  \quad\text{i.e.}\quad {}^t\!A A = aI_m.
\]
Since $\operatorname{rank}({}^t\!A A)\leq \operatorname{rank}(A)\le n<m$, the matrix
${}^t\!A A$ cannot be a nonzero scalar multiple of $I_m$.
Hence $a=0$ and $A=0$.
Therefore we have
\[
  \frac{\partial^2}{\partial x_1^2}\,\|F(x)\|^2\Big|_{x=0}
  = \frac{\partial^2}{\partial x_1^2}\,\left(a\|x\|^2+o(\|x\|^2)\right)\Big|_{x=0} = 2a = 0,
\]
so we obtain
\[
  (\rightfunctor(f)D_m)(q_n)
  = D_m(q_n\circ f)
  = \frac12\,\frac{\partial^2}{\partial x_1^2}\big((q_n\circ f)\circ \pi_m\big)(0)
  = 0.
\]
Since $f$ is arbitrary, $\generalright{(H_m,0)}{0}{H_n}=0$ for $m>n$.
\end{proof}

Hence even within the right-type construction, infinitely many distinct tangent functors appear.
Informally, $\generalrightfunctor{(H_m,0)}$ measures the “sharpness” of singularities:
it becomes trivial when the source space is sharper than the target.

\section*{Acknowledgments}
    I would like to express my sincere gratitude to my supervisor, Takuya Sakasai, for his valuable guidance and continuous support throughout this research. 
    I am also grateful to Katsuhiko Kuribayashi for many insightful discussions and helpful advice. 
    I thank Toshiyuki Kobayashi for his generous support and encouragement as my supporting supervisor in the WINGS-FMSP program.  
    This work was supported by JSPS Research Fellowships for Young Scientists and KAKENHI Grant Number JP24KJ0881. 
    Lastly, I would like to acknowledge the WINGS-FMSP program for its financial support.

\bibliographystyle{plain}
\bibliography{infinitetangent}



\end{document}